\DeclareSymbolFont{stixletters}{LS1}{stix}{m}{it}
\DeclareMathAccent{\cev}{\mathord}{stixletters}{"91}
\def\Re{\mathbb{R}}
\def\R{\mathbb{R}}
\def\argmin{\mathop{\text{\rm arg\,min}}}
\def\notes#1{\marginpar{\tiny #1}\typeout{Notes!
Notes!
Notes!
}}
\renewcommand{\notes}[1]{\typeout{notes!}}
\def\Re{\field{R}}
\def\Expect{{\mathbb E}}
\def\R{\mathbb{R}}
\newtheorem{remark}{Remark}
\newtheorem{proposition}{Proposition}
\newtheorem{assumption}{Assumption}
\newtheorem{problem}{Problem}
\def\beq{\begin{eqnarray}} 
\def\bc{\begin{center}} 
\def\be{\begin{enumerate}}
\def\bi{\begin{itemize}} 
\def\bs{\begin{small}}
\def\bS{\begin{slide}}
\def\ec{\end{center}} 
\def\ee{\end{enumerate}}
\def\ei{\end{itemize}}
\def\es{\end{small}}
\def\eS{\end{slide}}
\def\eeq{\end{eqnarray}}
\newcommand{\trace}{\text{Tr}}
\newcommand{\ud}{\,\mathrm{d}}
\def\Re{\mathbb{R}}
\def\argmin{\mathop{\text{\rm arg\,min}}}
\renewcommand{\Re}{\mathbb{R}}
\newcounter{rmnum}
\newcounter{anum}
\title{\LARGE \bf A Time-Reversal Control Synthesis for Steering the State of \\Stochastic Systems
}
 \author{Yuhang Mei$^{\star}$, Amirhossein Taghvaei$^{\star}$, Ali Pakniyat$^{\dagger}$
 \thanks{Yuhang Mei and Amirhossein Taghvaei are supported by the National Science Foundation (NSF) award EPCN-2318977 and EPCN-2347358.}
     \thanks{$^\star$Department of Aeronautics \& Astronautics, University of Washington, Seattle; {\tt\small yuhangm@uw.edu,amirtag@uw.edu}.}   
	\thanks{$^\dagger$Department of Mechanical Engineering,
University of Alabama; {\tt\small apakniyat@ua.edu}.}
}
\begin{document}

      \maketitle
	 \thispagestyle{empty}
	 \pagestyle{empty}


	 \begin{abstract}
          This paper presents a novel approach for steering the state of a stochastic control-affine nonlinear system to a desired target within a finite time horizon. Our method leverages the time-reversal of diffusion processes to construct the required feedback control law. Specifically, the control law is the so-called score function associated with the time-reversal of random state trajectories that are initialized at the target state and are simulated backwards in time.  A neural network is trained to approximate the score function, enabling applicability to both linear and nonlinear stochastic systems.  Numerical experiments demonstrate the effectiveness of the proposed method across several benchmark examples. 

	\end{abstract}

    \section{Introduction}
    Steering the state of a stochastic system to a target state or distribution is a fundamental problem in stochastic control and useful in applications such as stochastic thermodynamics~\cite{peliti2021stochastic,chen2019stochastic,fu2021maximal}, machine learning~\cite{chen2023generative,domingo2024stochastic,rapakoulias2024go}, and robotics~\cite{shah2012stochastic,gorodetsky2018high,theodorou2011iterative,okamoto2019optimal,liu2024optimal}. 
One prominent framework for studying this problem is the  theory of Schr\"odinger bridges for diffusion processes~\cite{wakolbinger1990schrodinger,pavon1991free}, which aims to find an optimal control law that drives the system from a given initial distribution to a specified target distribution over a finite horizon. Exact solutions are available for linear stochastic systems with Gaussian initial and target distributions~\cite{chen2015optimal,chen2015part2,teter2024schr,teter2025markov}, and numerical procedures extend these results to non-Gaussian~\cite{liu2023generalized,mei2024flow} and nonlinear drift~\cite{caluya2021wasserstein} cases. 
Beyond Schr\"odinger bridge theory, alternative approaches using the stochastic maximum principle and convex duality have been proposed to derive optimal control policies that steer the state toward desired target  points~\cite{pakniyat2021steering, pakniyat2021partially} or distributions~\cite{pakniyat2022convex, pakniyat2024distributionally}.
{However, the reliance on optimality in both these and Schr\"odinger-based approaches makes controller identification for nonlinear settings computationally expensive, as it requires solving complex partial differential equations.}

In contrast, this paper shifts attention away from optimality (and initial-state dependence) toward designing a feedback law that guarantees finite-time attractability of a chosen target. Specifically, we consider a stochastic process $X$ that is governed by a control-affine nonlinear stochastic system~\eqref{eq:sde} and propose a framework for deriving feedback control laws that steer the state $X_t$ toward a target state $X_T=x_f$, at a finite time horizon $T$, without requiring optimality relative to the initial condition.

\begin{figure}[t]
\centering
\includegraphics[width=0.98\hsize]{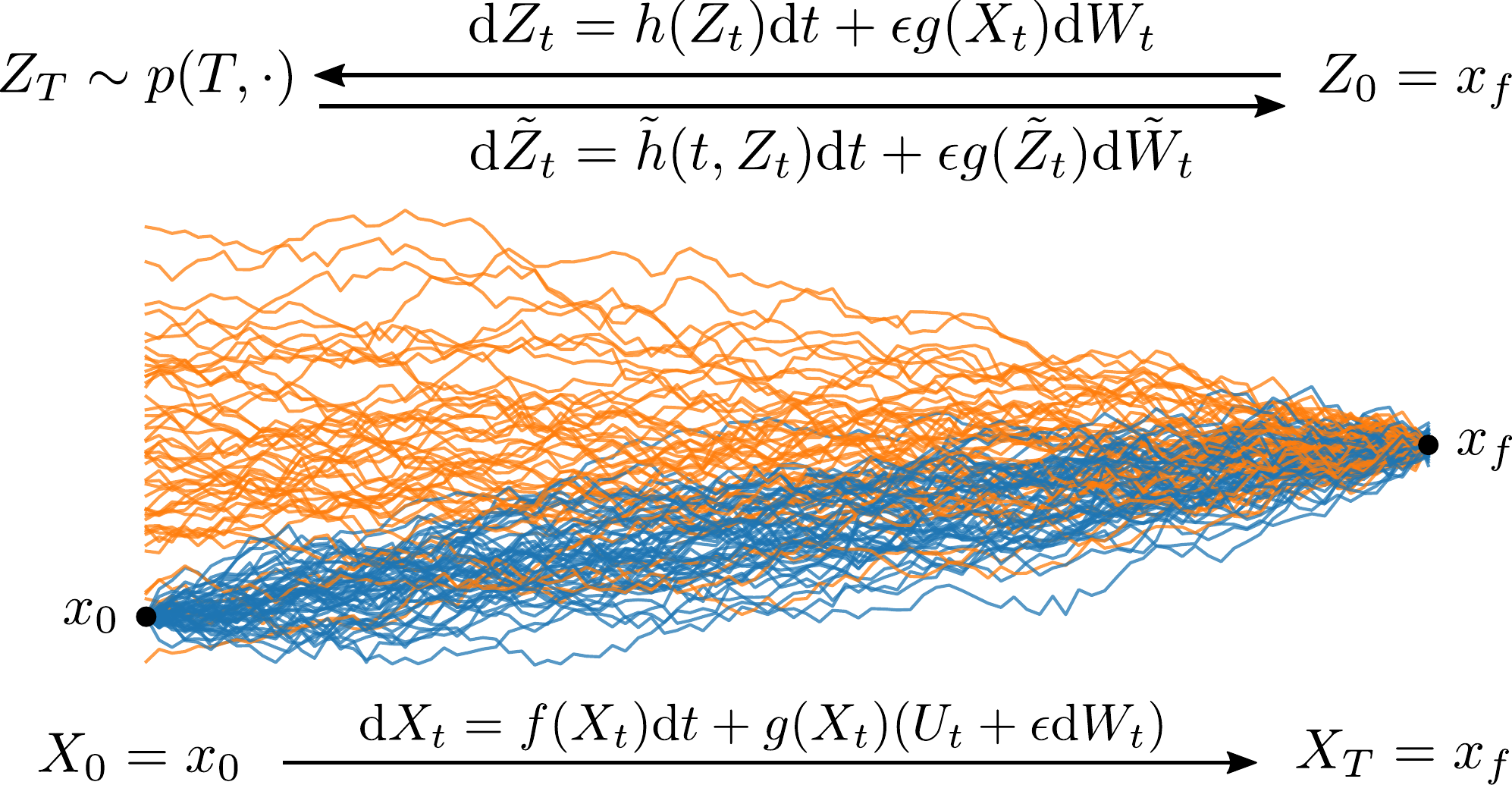}
\caption{Illustration of the proposed time-reversal methodology to steer the state $X_t$ from $x_0$ to $x_f$. The auxiliary process $Z_t$ is simulated backward in time from $x_f$. The dynamics of the time-reversal $\tilde Z_t:=Z_{T-t}$  is forward in time, with an absorption property to $x_f$. This dynamics is used to design the control law $U_t=k(t,X_t)$. The expressions for  the functions $h$, $\tilde h$, and $k$ appear in Section~\ref{sec:methodology}.}
\label{fig:schematic}
\end{figure}

Our approach is inspired by time-reversal theory of diffusions~\cite{anderson1982reverse,haussmann1986time,cattiaux2023time}, which has recently gained attention in machine learning through diffusion-based generative models for images~\cite{NEURIPS2020_4c5bcfec,song2020denoising,song2019generative,songscore2021score}. In these models, one first simulates a stochastic differential equation (SDE) that gradually adds noise to transform complex data (e.g., images) into a simpler, typically Gaussian distribution. The trajectory of this noising process is then used to learn the so-called score function, which in turn is used to run the SDE in reverse. Sampling from the Gaussian and applying this reverse process (the ``denoising'' procedure) generates new samples resembling the original data. From a control-theoretic viewpoint, the learned score function serves as a control law that transforms the Gaussian distribution to the target data distribution. A key benefit of this procedure is its computational tractability as it involves solving a regression problem rather than resorting to dynamic programming or the maximum principle, though at the cost of losing optimality.

Inspired by these diffusion generative models, we draw an analogy for our setting by simulating the state of stochastic system backward in time from a given target state $x_f$ (or a small Gaussian distribution around it). Reversing this backward process in the forward-time direction ensures convergence to the target, with the learned score function serving as the feedback control law. To provide an intuitive understanding of our approach, we outline the key steps involved in constructing the proposed feedback control law, accompanied with an illustration in Figure~\ref{fig:schematic}. 
\begin{enumerate}
    \item \textbf{Auxiliary Process $\boldsymbol{Z}$:} We construct an auxiliary process $Z$, initialized at the desired terminal state $x_f$. This process generates a probability density function $p(t,x)$ with a delta distribution at the desired terminal location at its initial time, i.e., $p(0,x) = \delta_{x_f}(x)$ while producing spread-out distributions at other times. This serves as the foundation for defining a time-reversed process.

    \item \textbf{Time-Reversed Auxiliary Process $\boldsymbol{\tilde{Z}}$}: By reversing time for the auxiliary process $Z$, we obtain a new process $\tilde{Z}_t =Z_{T-t}$ with the probability density function $\tilde{p}(t,x) = p(T-t,x)$ possessing a delta distribution at the desired terminal location (matching the target state) at its terminal time, i.e., $\tilde{p}(T,x) = \delta_{x_f}(x)$. The governing dynamics of $\tilde{Z}$ possess an absorption property for the desired terminal state, i.e., almost all sample paths are attracted to and absorbed by the desired state. This property is critical for ensuring that trajectories converge to the target state.

    \item \textbf{Construction of Feedback Control Law for $\boldsymbol{X}$:} The feedback law synthesis for $X$ leverages the absorption property of the time-reversed process $\tilde{Z}$, to yield closed loop dynamics that enforce almost sure convergence to the target state within a fixed time horizon. Crucially, if the initial condition $x_0$ of the original process $X$ lies within the support of the initial distribution of $\tilde{Z}$, then $x_0$ belongs to the fixed-time stochastic region of attraction of the desired terminal state almost surely.
\end{enumerate}

\noindent
{\bf Relation to diffusion bridges:} A diffusion bridge is defined as the law of the diffusion processes $X$ conditioned on the event that $X_0=x_0$ and $X_T=x_f$. Using Doob's $h$-transform \cite[Sec. 7.5]{sarkka2019applied}, the conditioned process satisfies the same SDE as the original diffusion but with an added drift that can be interpreted as a control law steering the state to the target. However, this control law is distinct from the one we propose in this paper. In particular, our control law is not generally of gradient form, whereas the Doob $h$-transform yields the gradient of the log-transition density.  Interestingly, the two control laws coincide in the linear Gaussian setting, suggesting a deeper connection between the approaches. From a computational standpoint, there is a recent work~\cite{10.1093/biomet/asaf048} that applies time-reversal and score matching to simulate diffusion bridges. However, their proposed method  requires applying time reversal and score-function approximation twice~\cite[Sec. 2.3]{10.1093/biomet/asaf048}, with potential error accumulation, while our method requires only a single such operation.
A detailed numerical comparison to fully characterize the trade-offs is planned for future investigation.

The paper is organized as follows. Section \ref{sec:problem_formulation} presents the problem formulation and the necessary background on the time-reversal theory of diffusions. Section \ref{sec:methodology} presents our proposed  time-reversal methodology for control synthesis, along with the analysis of the linear Gaussian setting. Section \ref{sec:results} presents numerical experiments on several benchmark examples, demonstrating the performance of the proposed method in both linear and nonlinear settings. 


\subsection{Notation}
The notation $\frac{\partial}{\partial x}$ is used to denote the derivative with respect to the $x$ variable. For example, for a smooth function $f:\R^n \to \Re$,  $\frac{\partial f}{\partial x}:\R^n\to \R^n$ denotes the gradient of $f$ with respect to $x$. And for a smooth vector-field $f:\R^n \to \Re^n$,  $\frac{\partial f}{\partial x}:\R^n\to \R^{n\times n}$ denotes the Jacobian.  The probability density function (PDF) of a multivariate Gaussian distribution with mean vector  $m$ and covariance matrix $\Sigma$ is denoted by $\mathcal{N}(x;\mu,\Sigma)$. For a positive definite matrix $P$, the weighted $2$-norm of a vector $x \in \R^n$, is denoted by $\|x\|_P$, and defined as $\|x\|_P := \sqrt{x^\top Px}$.

    \section{Problem Formulation and Background}\label{sec:problem_formulation}
    \subsection{Problem setup}
Consider a  control system governed by a control-affine stochastic differential equation (SDE)
\begin{equation}\label{eq:sde}
    \ud X_t = f(X_t)\ud t + g(X_t) (U_t \ud t + \epsilon \ud W_t), ~~X_0 = x_0,
\end{equation}
where $X_t \in \mathbb{R}^n$ is the state, $U_t \in \R^m$ is the control input, $W_t \in \R^m$ is the standard Wiener process that denotes the process noise, $f:\R^n\to\R^n$ and $g:\Re^n \to \Re^{n\times m}$ are drift and diffusion functions, and $\epsilon >0$ is a positive parameter that denotes the strength of the noise. Let $\mathcal{F}_t = \sigma(W_s; 0\leq s \leq t)$ be the filtration generated by the Wiener process. The control input $U_t$ is constrained to be adapted to the filtration $\mathcal{F}_t$. We are interested in the problem of designing the control input that steers the state of the system to a given target state. 


\begin{problem}\label{prob:steer-to-y}
    Given a target state $x_f \in  \R^n$ and a fixed terminal time $T>0$, find a feedback control law $\{U_t=k(t,X_t);t\in[0,T]\}$, for a function $k: [0,T] \times \R^n \rightarrow \R^m$, such that  $X_T = x_f$ almost surely.
\end{problem}

Note that the $\mathcal{F}_t$-adaptability condition is automatically satisfied when $U_t$ is expressed as a function of $X_t$.  We also consider a relaxed version of the problem, where the  equality is relaxed to a bound on the average distance from the target. This relaxation allows us to obtain control laws that are not singular as $t\to T$. See Section~\ref{sec:singular}.

\begin{problem}\label{prob:approx-steer-to-y}
    Given a target state $x_f \in  \R^n$, a fixed terminal time $T>0$, and error tolerance $\delta>0$, find a feedback control law $\{U_t=k(t,X_t);t\in[0,T]\}$, for a function $k: [0,T] \times \R^n \rightarrow \R^m$, such that  $\mathbb E[\|X_T - x_f\|^2]\leq \delta$.
\end{problem} 

\begin{remark}[Modelling assumptions]
   The control-affine structure of the dynamic model in~\eqref{eq:sde} is critical for the applicability of the proposed method. This model is common in robotics, aerospace, and finance \cite{zhou2023safe,liu2019fuel,calafiore2008multi}. The time-invariant assumption for the functions $f$ and $g$ is made for ease of presentation and can be relaxed to be time-varying. 
\end{remark}


Our solution methodology for these problems is based on the time-reversal of diffusion theory, which we review  next. 

\subsection{Time-Reversal of Diffusions}
\label{sec:tr}
Consider a diffusion process $Z \mathrel{:=} \{Z_t\in \R^n;0 \leq t \leq T\}$ governed by the following SDE 
\begin{equation}\label{eq:fsde}
    \ud Z_t \!=\! h(Z_t) \ud t + \epsilon g(Z_t) \ud W_t,\quad Z_0=z,
    \end{equation}
    where   $h:  \R^n \rightarrow \R^n$ is a drift function, and $z$ is an initial condition, that are later designed as part of our methodology, in Section~\ref{sec:naive-alg}.  We make the following assumption about the functions $h$ and $g$. 
    
\begin{assumption}\label{asum-fg}
    The functions $h$ and $g$ are smooth and globally Lipschitz. 
\end{assumption}

Assumption \ref{asum-fg} implies that SDE \eqref{eq:fsde} admits a unique strong solution \cite[Thm. 5.2.1.]{oksendal2003stochastic}. This allows us to define the time-reversal of $Z_t$ according to 
\begin{equation*}
    \tilde{Z} \mathrel{:=} \{\tilde{Z}_t =Z_{T-t} ;\, 0 \leq t \leq T\}. 
\end{equation*} 
The time-reversal theory is concerned with obtaining the SDE for the reversed process $\tilde Z_t$. In order to do so, we follow the approach presented in~\cite{haussmann1986time} which, in addition to Assumption~\ref{asum-fg}, requires a suitable integrability condition on the density of $Z_t$. In particular, letting $p(t,x)$ denote the probability density function of $Z_t$, it is required that 
    \begin{equation}\label{eq:asum-bound}
        \int_{t_0}^T \int_\Omega |p(t,x)|^2 + \|g(x)^\top \frac{\partial p}{\partial x} (t,x)\|^2 \ud x \ud t < \infty ,
    \end{equation}
    for any open bounded set $\Omega \subset \Re^n$  and $t_0>0$.
This condition is valid under  the following assumption about the drift and diffusion functions. 


\begin{assumption}\label{asum-brackets}
    For all $x\in \R^n$, the subspace generated by the Lie-brackets\footnote{The Lie-bracket of two smooth vector-fields $g_i:\R^n \to \R^n$ and $g_j:\R^n\to \R^n$ is defined as $[g_i,g_j](x)=\frac{\partial g_j}{\partial x}(x)g_i(x) - \frac{\partial g_i}{\partial x}(x)g_j(x)$} of the elements of $\{h(x),g_1(x),\ldots,g_m(x)\}$, with $g_i$ the $i$-th column of~$g$, spans the entire space $\R^n$. 
\end{assumption}

Assumption \ref{asum-brackets} is knwon as the H\"ormander condition that ensures the generator associated with the diffusion process~\eqref{eq:fsde} is hypoelliptic, implying that $Z_t$ admits a smooth density $p(t,x)$ for any $t>0$~\cite{chaleyat1984hypoellipticity,hormander1967hypoelliptic}. As a result, condition~\eqref{eq:asum-bound} is satisfied. With a smooth density, one can define the so-called score function $s:(0,T] \times \Re^n \to \Re^n$ whose $i$-th component is given by
  \begin{align}\label{eq:score-def}
            s_i(t,x) \mathrel{:=} \frac{1}{p(t,x)} \sum_{j=1}^n  \frac{\partial}{\partial x_j}(G_{i,j}(x)p(t,x)),
        \end{align}
        where $G_{i,j}(x)$ is the $(i,j)$-entry of the matrix $G(x):=g(x)g(x)^\top \in \R^{n\times n}$. 
Then, according to \cite[Thm. 2.1]{haussmann1986time},
the reversed process $\tilde Z_t$ satisfies the SDE
\begin{align}
            \ud \tilde{Z}_t = -h(\tilde Z_t)\ud t +  \epsilon^2 s(T-t,\tilde Z_t) \ud t+ \epsilon g(\tilde{Z}_t) \ud  \tilde W_t,    \label{eq:tr-sde} 
        \end{align}
        where $\tilde W_t \in \R^m$ is the standard $m$-dimensional Wiener process. Note that the reversed process satisfies the condition $\tilde Z_T=Z_0 = z$. Therefore, the dynamics generated by the score function and  the drift term has an absorption property for the state $z$ at terminal time $T$. This is the basis for our methodology in Section~\ref{sec:naive-alg}. 
      
         \begin{remark}
        The SDE for the time-reversal process may also be obtained using Girsanov theorem, as in~\cite{cattiaux2023time}, but this alternative approach requires the diffusion function to be non-singular, i.e. $G(x):=g(x)g(x)^\top$ be uniformly positive-definite for all $x\in \R^n$~\cite{anderson1982reverse,cattiaux2023time}. This assumption is restrictive due to the role that the function $g$ plays in the stochastic control problem~\eqref{eq:sde}. In particular, the columns of $g$ are the directions that the control input affects the state. Therefore, a positive-definite assumption on $G$ implies full control authority which is restrictive in many control applications. Assumption~\ref{asum-brackets} is a weaker assumption that allows the construction of the time-reversal SDE for a degenerate diffusion. It is also in agreement with the local accessibility condition in geometric control theory  for the deterministic version of the system~\eqref{eq:sde},  when $\epsilon=0$~\cite{krener1974generalization,sussmann1987general}. 
    \end{remark}
    
    The formula for the score function~\eqref{eq:score-def} depends on the density $p(t,x)$ which is explicitly available only in the special linear Gaussian setting~(see Section~\ref{sec:lin-gau}). In a general nonlinear and non-Gaussian setting, the score function  is approximated as the solution to a stochastic optimization problem, as described in the next subsection. 
\subsection{Score function approximation}
It is numerically useful to note that the score function is the solution to the minimization problem $\min_\psi\,J(\psi)$ where 
 \begin{align}\label{eq:score-opt}
       J(\psi):= \mathbb{E}[ \frac{1}{2}\|\psi(t,Z_t)\|^2 + \sum_{i,j=1}^n G_{i,j}(Z_t) \frac{\partial \psi_i}{\partial x_j} (t,Z_t)) ],
    \end{align}
    and the expectation is both over $t \sim \text{Unif}[0,T]$ and $Z_t$, solution to~\eqref{eq:fsde}. 
    This follows by writing the expectation as the integral with respect to the density $p(t,x)$, application of integration by parts on the second term,  and expressing the objective function~\eqref{eq:score-opt} as 
    \begin{align*}
        \mathbb{E}\left[ \frac{1}{2}\|\psi(t,Z_t)-s(t,Z_t)\|^2   \right] + \text{(constant)},
    \end{align*}
    where the constant is independent of $\psi$~\cite[Thm. 1]{hyvarinen2005estimation}. The optimization \eqref{eq:score-opt} is known as implicit score matching \cite{tang2024score}. This optimization procedure is later used in the construction of our numerical algorithm in Section~\ref{sec:Algorithm}.  


    \subsection{Probability transition kernels}\label{sec:kernel}
   This section introduces notations and definitions for probability transition kernels associated with SDEs~\eqref{eq:fsde} and~\eqref{eq:tr-sde} that are later used in the proof of Propostions~\ref{prop:sol2p1} and~\ref{prop:terminal-error}. 
    Let $\kappa_{t,s}(x'|x)$ denote the probability transition kernel from time $s$ to time $t$ that is associated with SDE~\eqref{eq:fsde},
i.e. the conditional probability density function of $Z_t=x'$ given $Z_s=x$, for any $t\geq s>0$. Similarly, let $\tilde \kappa_{t,s}(x'|x)$ denote the probability transition kernel for SDE~\eqref{eq:tr-sde}. Using the probability transition kernel, the joint probability density function of $(Z_t,Z_s)$ satisfies
\begin{align*}
\int\limits_{B_{x} \times B_{x'}} \hspace{-9pt}P_{Z_t,Z_s}(x',x) \ud x^\top \! \ud x'\! =\! \int\limits_{B_{x} \times B_{x'}} \hspace{-9pt}\kappa_{t,s}(x'|x) p(s,x) \ud x^\top \!\ud x'
\end{align*}
for arbitrary Borel sets $B_{x}, B_{x'} \subset \R^n$, implying   
\begin{align*}
     P_{Z_t,Z_s}(x',x) =  \kappa_{t,s}(x'|x) p(s,x).
\end{align*}
Similarly, for $(\tilde Z_t,\tilde Z_s)$ we have 
\begin{align*}
    P_{\tilde Z_t,\tilde Z_s}(x',x) =  \tilde \kappa_{t,s}(x'|x) p(T-s,x), 
\end{align*}
where we used the fact that $\tilde Z_s = Z_{T-s}$ with probability density $p(T-s,x)$. The identity $(Z_T,Z_{T-t})= (\tilde Z_0,\tilde Z_t)$ implies the equality $P_{Z_T,Z_{T-t}}(x',x)= P_{\tilde Z_{t},\tilde Z_0}(x,x')$, concluding the relationship 
\begin{align}\label{eq:kappa-identity}
   &\kappa_{T,T-t}(x'|x) p(T-t,x) = \tilde \kappa_{t,0}(x|x') p(T,x')
\end{align}
between the two kernels $\kappa$ and $\tilde \kappa$.

 \section{Proposed Methodology}
 \label{sec:methodology}

\subsection{Solution to problem~\ref{prob:steer-to-y}}\label{sec:naive-alg}

We propose to solve Problem~\ref{prob:steer-to-y} using the time-reversal theory that is presented in Section~\ref{sec:tr}. We start with the process $Z_t$ from~\eqref{eq:fsde} and initialize it  at $Z_0=x_f$. With this initialization, the reversed process $\tilde Z_t:=Z_{T-t}$ satisfies  the terminal condition $\tilde Z_T=Z_0=x_f$, and its dynamics has an absorption property for $x_f$. Therefore, in order to solve Problem~\ref{prob:steer-to-y}, we design $h$ so that SDE~\eqref{eq:tr-sde} for $\tilde Z_t$ takes a form similar to~\eqref{eq:sde} for $X_t$. Using the decomposition of the score function as
\begin{equation}\label{eq:score-decom}
    s(t,x) =  g(x) k^\star(t,x) + \mathfrak g(x)
\end{equation}
where the $i$-th component of $ k^\star:[0,T] \times \R^n \rightarrow \R^m$ and  $\mathfrak g :  \R^n \rightarrow \R^n$ are defined as 
\begin{align}\label{eq:k-star}
     k_i^\star(t,x) &\mathrel{:=} \frac{1}{ p(t,x)} \sum_{j=1}^n \partial_{x_j}(g_{j,i}(x)  p(t,x)),\\
 \mathfrak g_i(t,x) &\mathrel{:=} \sum_{j=1}^n \sum_{k=1}^m g_{j,k}(x) \partial_{x_j}g_{i,k}(x), \label{eq:frak-g}  
\end{align}
the SDE~\eqref{eq:tr-sde} takes the form 
\begin{align*}
    \ud \tilde Z_t = 
    &\big(-h(\tilde Z_t) + \epsilon^2\mathfrak g(\tilde Z_t)\big)\ud t \\&+  g(\tilde Z_t) \big(\epsilon^2 k^\star(T-t,\tilde Z_t)\ud t + \epsilon \ud \tilde W_t\big).
\end{align*}
Designing the function $h$ and control law $k$ according to 
\begin{align}\label{eq:tilde-f}
    h(x) &:= -f(x) +  \epsilon^2 \mathfrak  g(x),\\
    k(t,x) &:= \epsilon^2 k^\star(T-t,x),\label{eq:fbcontrol}
\end{align}
yields the following expressions for SDEs of $Z_t$ and $\tilde Z_t$:
\begin{subequations}
\begin{align}\label{eq:Z-alg}
\ud Z_t &= (-f(Z_t) + \epsilon^2 \mathfrak g(Z_t))\ud t +  \epsilon g(Z_t) \ud W_t,~Z_0=x_f
\\ \ud \tilde Z_t &= 
    f(\tilde Z_t)\ud t+  g(\tilde Z_t) (k(t,\tilde Z_t)\ud t + \epsilon \ud \tilde W_t).\label{eq:tilde-Z-alg}
\end{align}
Moreover, using the control law~\eqref{eq:fbcontrol} in~\eqref{eq:sde} concludes the following SDE for $X_t$: 
\begin{align}\label{eq:X-alg}
    \ud X_t = f(X_t)\ud t\!+ \! g(\tilde X_t) (k(t, X_t)\ud t + \epsilon \ud W_t),\quad X_0=x_0.
\end{align}
\end{subequations}
In summary, we have constructed three stochastic processes: 
\begin{enumerate}
    \item The process $Z_t$ that solves~\eqref{eq:Z-alg} from initial condition $Z_0=x_f$. The density of this process is denoted by $p(t,x)$;
    \item The process $\tilde Z_t := Z_{T-t}$ that solves~\eqref{eq:tilde-Z-alg} and satisfies the condition $\tilde Z_T=Z_0=x_f$. The control law $k(t,x)$ is defined in~\eqref{eq:fbcontrol} and~\eqref{eq:k-star};
    \item The process $X_t$ that solves~\eqref{eq:X-alg} starting from the initial condition $X_0=x_0$.   
\end{enumerate}
Note that, the SDE for $\tilde Z_t$ has the same form as the SDE for $X_t$ and the control law steers the process $\tilde Z_t$ to the target state $\tilde Z_T=x_f$. However, it remains to be shown that the control law steers $X_t$ to $X_T=x_f$, despite the difference in the initial conditions; $\tilde Z_0 = Z_T$ is random with probability density function $p(T,x)$, whereas $X_0=x_0$ is deterministic. The next result shows that $X_t$ reaches the target state $X_T=x_f$  whenever the initial condition satisfies $p(T,x_0)>0$.  

\begin{proposition}\label{prop:sol2p1}
Let $p(t,x)$ denote the probability density function  of $Z_t$ defined according to~\eqref{eq:Z-alg} and define the control law $k(t,x)$ according to~\eqref{eq:fbcontrol} and~\eqref{eq:k-star}. If the initial condition $X_0=x_0$ satisfies $p(T,x_0)>0$, then, problem~\ref{prob:steer-to-y} is solved with the feedback control law $k(t,x)$. 
\end{proposition}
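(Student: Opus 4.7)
The plan is to derive the conditional law of $\tilde Z_T$ given $\tilde Z_0 = x_0$, show that it is a point mass at $x_f$, and then transfer the conclusion to $X_T$ by noting that $X$ and $\tilde Z$ satisfy the same SDE.

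First, I would observe that the SDEs for $\tilde Z_t$ and $X_t$ are identical in form, the feedback law $k$ having been designed precisely so that both processes inherit the same drift and diffusion coefficients. Under Assumption~\ref{asum-fg}, this SDE admits a unique strong solution, so both processes are governed by a common probability transition kernel $\tilde\kappa_{T,0}$, giving
\[\mathbb{P}(X_T \in B \mid X_0 = x_0) = \tilde\kappa_{T,0}(B \mid x_0) \quad \text{for every Borel } B \subset \R^n.\]
The problem therefore reduces to characterizing $\tilde\kappa_{T,0}(\cdot \mid x_0)$.

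I would then exploit the time-reversal identity $(\tilde Z_0,\tilde Z_T) = (Z_T, Z_0)$ together with the fact that $Z_0 = x_f$ is deterministic. For any Borel sets $A, B \subset \R^n$,
\[\mathbb{P}(\tilde Z_0 \in A,\,\tilde Z_T \in B) = \mathbb{P}(Z_T \in A)\,\mathbf{1}_B(x_f) = \Bigl(\int_A p(T,x')\,dx'\Bigr)\mathbf{1}_B(x_f).\]
Disintegrating this joint law against the marginal density $p(T,\cdot)$ of $\tilde Z_0$ yields $\tilde\kappa_{T,0}(\cdot \mid x') = \delta_{x_f}$ for $p(T,\cdot)$-almost every $x'$. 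Combined with the first step, this already gives $X_T = x_f$ almost surely for Lebesgue-a.e.\ $x_0$ in the support of $p(T,\cdot)$.

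The final step is to upgrade this "almost everywhere" statement to the pointwise claim in the proposition. I would invoke continuity of the transition kernel in the initial datum: under Assumption~\ref{asum-fg}, standard SDE theory gives weak continuity of $x' \mapsto \tilde\kappa_{T,0}(\cdot \mid x')$, while Assumption~\ref{asum-brackets} together with H\"ormander's theorem guarantees that $p(T,\cdot)$ is smooth and that $\{x' : p(T,x') > 0\}$ is open, so every such point is a limit of points where the Dirac identity already holds. Weak continuity then forces $\tilde\kappa_{T,0}(\cdot \mid x_0) = \delta_{x_f}$ at the prescribed $x_0$, and combining with the first step yields $X_T = x_f$ almost surely. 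The main obstacle is precisely this last step: the delta singularity of $p(0,\cdot) = \delta_{x_f}$ prevents a direct use of the kernel identity~\eqref{eq:kappa-identity} at $t = T$, so the joint-distribution disintegration, followed by a kernel-continuity argument on the open support of $p(T,\cdot)$, is the natural route.
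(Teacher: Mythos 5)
Your first two steps run parallel to the paper's setup: identifying the law of $X_T$ with $\tilde\kappa_{T,0}(\cdot\mid x_0)$ because $X$ and $\tilde Z$ solve the same SDE, and exploiting $(\tilde Z_0,\tilde Z_T)=(Z_T,Z_0)$ with $Z_0=x_f$ deterministic, are both sound, and the disintegration correctly yields $\tilde\kappa_{T,0}(\cdot\mid x')=\delta_{x_f}$ for $p(T,\cdot)$-almost every $x'$. The genuine gap is in your final upgrade from ``a.e.\ $x'$'' to the prescribed point $x_0$. You invoke weak continuity of $x'\mapsto\tilde\kappa_{T,0}(\cdot\mid x')$ as ``standard SDE theory under Assumption~\ref{asum-fg}'', but Assumption~\ref{asum-fg} concerns $h$ and $g$ only, whereas $\tilde\kappa_{T,0}$ is the kernel of the reversed SDE~\eqref{eq:tr-sde}, whose drift contains the score term $\epsilon^2 s(T-t,\cdot)$. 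That term is neither globally Lipschitz nor even bounded as $t\to T$: since $p(T-t,\cdot)\to\delta_{x_f}$, the score blows up near the terminal time, and this singularity is exactly the absorption mechanism. A Feller-type continuity statement for the time-$T$ kernel of an SDE with a drift that is singular at the endpoint is not standard; it is essentially a claim of the same difficulty as the proposition itself, so as written this step does not go through.

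The paper avoids precisely this difficulty by never touching the reversed kernel at the singular time directly: it applies the identity~\eqref{eq:kappa-identity} at times $t<T$, where the assumption $p(T,x_0)>0$ and the fact $p(T,x)=\kappa_{T,0}(x\mid x_f)$ give the pointwise formula $\tilde\kappa_{t,0}(x\mid x_0)=\kappa_{T,T-t}(x_0\mid x)\,p(T-t,x)/p(T,x_0)$, and then lets $t\to T$ using the weak convergence $p(T-t,\cdot)\to\delta_{x_f}$. All the regularity needed there concerns the \emph{forward} process $Z$, whose coefficients are smooth, globally Lipschitz, and hypoelliptic by Assumptions~\ref{asum-fg} and~\ref{asum-brackets}, so the kernel $\kappa$ and the density $p$ are smooth and the pointwise evaluation at $x_0$ is licensed by $p(T,x_0)>0$ rather than by any continuity property of $\tilde\kappa$. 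If you wish to salvage your route, you would need to prove continuity of the reversed kernel in its initial point up to the terminal time, and the natural way to do that is again through the representation above --- which leads you back to the paper's argument.
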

\begin{proof}
Let $\kappa_{t,s}(x'|x)$ and $\tilde \kappa_{t,s}(x'|x)$ be the probability transition kernels for SDEs~\eqref{eq:Z-alg} and~\eqref{eq:tilde-Z-alg}, respectively, as defined in Section~\ref{sec:kernel}.  
The probability transition kernel associated with the SDE~\eqref{eq:X-alg} is also $\tilde \kappa_{t,s}(x'|x)$ due to the fact that SDEs~\eqref{eq:tilde-Z-alg} and~\eqref{eq:X-alg} are identical.  Therefore, with the initial condition $X_0=x_0$, the probability density function of $X_t$ becomes equal to $\tilde \kappa_{t,0}(x|x_0)$. The goal is to show that  $\tilde \kappa_{t,0}(x|x_0)$ approaches the Dirac delta distribution $\delta_{x_f}(x)$ (in the weak sense) as $t$ approaches $T$. The identity~\eqref{eq:kappa-identity} implies  
    \begin{align*}
   \tilde \kappa_{t,0}(x|x_0)  &= \frac{  \kappa_{T,T-t}(x
    _0|x)p(T-t,x)}{ p(T,x_0)}\\
    &=\frac{  \kappa_{T,T-t}(x
    _0|x)p(T-t,x)}{ \kappa_{T,0}(x_0|x_f)}
\end{align*}
where we used the assumption that $p(T,x_0)>0$ and the fact that $ p(T,x)= \kappa_{T,0}(x|x_f)$ due the initial condition $Z_0=x_f$. 
 Taking the limit as $t\to T$ and using the fact that $p(T-t,x)$ approaches $\delta_{x_f}(x)$ concludes the result. 
\end{proof}

\subsection{Avoiding singularity of the control law}\label{sec:singular}
The feedback control law~\eqref{eq:fbcontrol} becomes singular in the limit as $t$ approaches the terminal time $T$ because the distribution $p(T-t,x)$ approaches the Dirac delta distribution $\delta_{x_f}(x)$. The singularity is unavoidable when an almost sure constraint $X_T=x_f$ is required. In order to avoid the singularity, we consider Problem~\ref{prob:approx-steer-to-y} where the almost sure equality is relaxed to a bound on the average distance to the target. We propose to solve problem~\ref{prob:approx-steer-to-y} using the time-reversal procedure presented in Section~\ref{sec:naive-alg}, with the difference that the process $Z_0$ is now initialized from a Gaussian distribution around $x_f$, i.e. $\mathcal N(\cdot;x_f,\sigma^2)$ with $\sigma>0$. 
We prove that, with a small enough $\sigma$, the resulting control law is able to solve problem~\ref{prob:approx-steer-to-y} while maintaining non-singularity, in the linear Gaussian setting,  with error bounds established in Proposition~\ref{prop:terminal-error},
while deferring the analysis of error bounds for the general nonlinear case to future work.


\subsection{Analysis of the linear Gaussian setting}\label{sec:lin-gau}
In this section, we study the proposed time-reversal method in the linear Gaussian setting, where
\begin{align*}
    f(x) = Ax,\quad g(x) = B,
\end{align*}
for matrices  $A \in \R^{n\times n}$ and $B \in \R^{n \times m}$. In this case, $f$ is linear and $g$ is constant, implying that both functions are smooth and globally Lipschitz. Therefore, Assumption~\ref{asum-fg} is satisfied. Furthermore, Assumption \ref{asum-brackets} will also hold whenever $(A,B)$ is controllable.

Under such setting, the SDEs \eqref{eq:Z-alg}, \eqref{eq:tilde-Z-alg}, and ~\eqref{eq:X-alg} take the form
\begin{subequations}
\begin{align}
    &\ud Z_t = -A Z_t \ud t + \epsilon B \ud W_t, \hspace{59pt} Z_0 = x_f,\label{eq:Z-lin}\\
    &\ud \tilde Z_t = A \tilde Z_t \ud t + B( k(t,\tilde Z_t) \ud t +\epsilon \ud \tilde W_t), \hspace{2pt} \tilde Z_T = x_f, \label{eq:tilde-Z-lin}\\
    &\ud X_t \!=\! A X_t \ud t \!+\! B(k(t,X_t)\ud t \!+ \!\epsilon \ud W_t), \hspace{10pt} X_0\! =\! x_0. \label{eq:X-lin}
\end{align}
\end{subequations}
The probability density of $Z_t$ is Gaussian, for all $t>0$, because \eqref{eq:Z-lin} is linear and the initial state $Z_0$ is deterministic.  In particular, $p(t,x)=\mathcal N(x;m_t,\Sigma_t)$ where the mean and covariance are given by
\begin{align*}
      m_t &= e^{-At}x_f,\\
      \Sigma_t &=  \epsilon^2 \int_0^t e^{-A(t-s)}BB^\top e^{-A^\top(t-s)}\ud s.
\end{align*}
Substitution of the Gaussian distribution formula in \eqref{eq:k-star}, and the use of \eqref{eq:fbcontrol}, yields the following formula for the feedback control law:
\begin{align}\label{eq:k-linear}
    k(t,x) 
    &= -\epsilon^2 B^\top \Sigma_{T-t}^{-1}(x-m_{T-t}),
\end{align}
for $t \in [0,T)$. This is similar to the control law that appears in~\cite[Eq. (11)]{pakniyat2021steering} and \cite{chen2015stochastic}, obtained through a stochastic optimal control formulation.

It is worth remarking that in the limit as $t \to T$, the covariance $\Sigma_{T-t} \to 0$, thus the control law becomes singular, as described in Section~\ref{sec:singular}. To resolve the singularity issue, we initialize $Z_0 \sim \mathcal{N}(\cdot;x_f,\sigma^2I)$. Under this initial condition, the distribution of $Z_t$ remains Gaussian $\mathcal N(x;m_t,Q_t)$ with the same mean  as before, but with a new covariance that is given by 
$
    Q_t = \Sigma_t + \sigma^2 e^{-At}e^{-A^\top t}
$.
The resulting feedback control law is
\begin{align}\label{eq:approx-k-linear}
    \hat k(t,x) 
    &= -\epsilon^2 B^\top Q_{T-t}^{-1}(x-m_{T-t}).
\end{align}
The new feedback control law remains nonsingular as $t\to T$. However, there is no guarantee that it would steer $X_t$ to the target state $x_f$. The following proposition  characterizes the error $\mathbb E[\|X_T-x_f\|^2]$ when the control law~\eqref{eq:approx-k-linear} is used instead of~\eqref{eq:k-linear}. 
\begin{proposition}\label{prop:terminal-error}
    Under the feedback control law defined in \eqref{eq:approx-k-linear}, the expected squared error between the terminal state and the target state is given by
    \begin{equation}\label{eq:error}
        \Expect[\|X_T-x_f\|^2] = \sigma^4\|e^{TA}x_0-x_f\|^2_{M^{-2}} + \sigma^2(n -\trace(M^{-1})),
    \end{equation}
    where $M := \sigma^2 I +  \epsilon^2\int_0^T e^{As}BB^\top e^{A^\top s}\ud s$. Moreover, for any $\delta>0$, there exists a small enough $\sigma>0$ that solves Problem~\ref{prob:approx-steer-to-y}.
\end{proposition}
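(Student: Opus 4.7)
The plan is to exploit the transition-kernel identity~\eqref{eq:kappa-identity} to obtain the distribution of $X_T$ in closed form, bypassing any need to solve a closed-loop Riccati equation. Since the SDEs~\eqref{eq:X-lin} and~\eqref{eq:tilde-Z-lin} coincide, the density of $X_T$ starting from $X_0 = x_0$ is precisely $\tilde\kappa_{T,0}(\cdot\mid x_0)$. Specializing~\eqref{eq:kappa-identity} at $t = T$ yields
\[
    \tilde\kappa_{T,0}(x \mid x_0) = \frac{\kappa_{T,0}(x_0 \mid x)\, p(0,x)}{p(T,x_0)},
\]
where $p$ now refers to the density of $Z_t$ initialized with $Z_0 \sim \mathcal{N}(\cdot\,;x_f,\sigma^2 I)$. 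Each factor is Gaussian in the linear-Gaussian setting, so $X_T$ is Gaussian and $\Expect[\|X_T - x_f\|^2]$ reduces to an elementary bias-plus-variance calculation.

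Writing $F := e^{-AT}$, the ingredients are $\kappa_{T,0}(x_0 \mid x) = \mathcal{N}(x_0; Fx, \Sigma_T)$, $p(0,x) = \mathcal{N}(x;x_f,\sigma^2 I)$, and $p(T,x_0) = \mathcal{N}(x_0; Fx_f, Q_T)$. Collecting the terms quadratic and linear in $x$ in the numerator identifies $X_T \sim \mathcal{N}(\nu, \Lambda^{-1})$ with precision $\Lambda = F^\top \Sigma_T^{-1} F + \sigma^{-2} I$ and mean $\nu = \Lambda^{-1}\bigl(F^\top \Sigma_T^{-1} x_0 + \sigma^{-2} x_f\bigr)$, so that $\Expect[\|X_T - x_f\|^2] = \|\nu - x_f\|^2 + \trace(\Lambda^{-1})$.

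The main obstacle, and the step where most of the work hides, is rewriting $\Lambda^{-1}$ and $\nu - x_f$ in terms of $M$. The key identity is $Q_T = F M F^\top$, obtained by the substitution $u = T - s$ in the integral defining $\Sigma_T$. Combined with $Q_T = \Sigma_T + \sigma^2 F F^\top$, this gives $\Sigma_T = F(M - \sigma^2 I) F^\top$ and hence $F^\top \Sigma_T^{-1} F = (M - \sigma^2 I)^{-1}$. A short manipulation then produces $\Lambda = \sigma^{-2}(M - \sigma^2 I)^{-1} M$, whence $\Lambda^{-1} = \sigma^2 I - \sigma^4 M^{-1}$, while the analogous computation using $F^\top \Sigma_T^{-1} x_0 = (M - \sigma^2 I)^{-1} e^{AT} x_0$ gives $\nu - x_f = \sigma^2 M^{-1}(e^{AT} x_0 - x_f)$. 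Substituting these into the bias-variance decomposition produces~\eqref{eq:error}.

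For the second assertion, I would invoke Assumption~\ref{asum-brackets}, which in the linear-Gaussian setting reduces to controllability of $(A,B)$, so that the reachability Gramian $M_0 := \epsilon^2 \int_0^T e^{As} B B^\top e^{A^\top s}\,\ud s$ is strictly positive definite. Consequently $M \to M_0 \succ 0$ as $\sigma \to 0$, keeping $M^{-1}$ and $M^{-2}$ uniformly bounded. Both terms in~\eqref{eq:error} are then $O(\sigma^2)$, so any tolerance $\delta > 0$ can be achieved by choosing $\sigma > 0$ small enough, solving Problem~\ref{prob:approx-steer-to-y}.
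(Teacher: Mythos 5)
Your proposal is correct and follows essentially the same route as the paper's proof: the kernel identity~\eqref{eq:kappa-identity} applied with the SDEs \eqref{eq:X-lin} and \eqref{eq:tilde-Z-lin} identified, Gaussian algebra giving $X_T\sim\mathcal N\big(x_f+\sigma^2 M^{-1}(e^{TA}x_0-x_f),\,\sigma^2(I-\sigma^2 M^{-1})\big)$, a bias--variance decomposition, and the controllability-Gramian limit as $\sigma\to 0$; you merely make explicit the completion-of-square and the identity $Q_T=e^{-AT}M e^{-A^\top T}$ that the paper leaves implicit. The only caveat is that your derived variance term $\trace(\Lambda^{-1})=\sigma^2\big(n-\sigma^2\trace(M^{-1})\big)$ (which agrees with the paper's $P$) differs from the displayed~\eqref{eq:error} by a factor $\sigma^2$ inside the trace, an apparent typo in the statement that the paper's own proof shares.
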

\begin{proof}
     Let $\kappa$ and $\tilde \kappa$ denote the probability transition kernels associated with SDEs~\eqref{eq:Z-lin} and~\eqref{eq:tilde-Z-lin}, respectively, similar to the proof of Proposition~\ref{prop:sol2p1}. In terms of the kernels, the probability distribution of $X_T$ is equal to $\tilde \kappa_{T,0}(\cdot|x_0)$, because the SDE~\eqref{eq:X-lin} and~\eqref{eq:tilde-Z-lin} have the same form and $X_0=x_0$.  Then, upon the application of the time-reversal relationship~\eqref{eq:kappa-identity}, and the fact that $p(t,x) = \mathcal N(x;m_t,Q_t)$,
\begin{align*}
    \tilde \kappa_{T,0}(x|x_0) &=  \frac{ \kappa_{T,0}(x_0|x)p(0,x)}{p(T,x_0)} \\
    & = \frac{\mathcal N(x_0;e^{-AT}x,\Sigma_T) \mathcal N(x;x_f,\sigma^2I)}{\mathcal N(x_0;m_T,Q_T)}\\&=\mathcal N(x;\mu,P)
\end{align*}
where 
\begin{align*}
    \mu &= x_f + \sigma^2 M^{-1}e^{TA}(x_0 - e^{-TA}x_f),\\
    P & =  \sigma^2(I- \sigma^2 M^{-1}).
\end{align*}
  Now, to compute the error   $\mathbb E[\|X_T-x_f\|^2]$, we use the fact that $X_T \sim \mathcal N(\cdot;\mu, P)$ and, hence: 
\begin{align}
    \mathbb E[\|X_T-x_f\|^2] &= \|\mu-x_f\|^2 + \trace(P) \nonumber
\end{align}
which yields~\eqref{eq:error}. Moreover, in the limit as $\sigma \to 0$, $M$~converges to the controllability grammarian matrix which is positive definite  under the controllability assumption.  Therefore, taking the limit of~\eqref{eq:error} as $\sigma \to 0$, concludes that the error converges to zero, implying that for any $\delta>0$, there exists a $\sigma>0$ such that the error is smaller than $\delta$. 
\end{proof}

\begin{remark}\label{rem:improve-alg}
    The error \eqref{eq:error} comprises both a bias term and a variance term. The variance term is independent of $x_0$ and $x_f$, and bounded by $\sigma^2n$. In contrast, the bias term can be significant when $e^{TA}x_0$ and $x_f$ are far apart. This term arises due to the difference between the mean $\mathbb E[Z_T]=e^{-TA}x_f$ and $x_0$. To address this issue, we allow our proposed methodology the flexibility to incorporate a deterministic control input $u_t$ when $Z_t$ is simulated, and modify the control law to  $U_t = k(t,X_t) + \tilde u_t$ with $\tilde u_t = -u_{T-t}$\footnote{Our method can also incorporate a feedback control law in addition to the deterministic input. We do not pursue this in the paper, as computing a feedback law is considerably more challenging than finding a deterministic input.}. The addition of the deterministic input decreases the bias error by bringing the mean of $Z_T$ and $x_0$ closer. Infact, the difference becomes zero in the linear case when $(A,B)$ is controllable.  The details of this modification to the algorithm appears in~\ref{sec:Algorithm}. 
    
    
\end{remark}
\begin{algorithm}[bt]
\caption{Time-Reversal Control Synthesis} 
\begin{algorithmic}[1]
\STATE \textbf{Input:}  sample size $N$, step-size $\Delta t$, variance $\sigma$, deterministic control input $u_t$, function class $\Psi$.   
\STATE  $\{Z_0^i\}_{i=1}^N \sim \mathcal{N}(x_f, \sigma^2I)$

\FOR{$t\in \{\Delta t, 2\Delta t,\ldots,T-\Delta t,T \}$}
\STATE $\{\Delta  W^i_t\}_{i=1}^N\sim N(0,\Delta t I_n)$
	\STATE $ Z^{i}_{t+\Delta t} \!=\!  Z^i_{t}\!+\!(-f(Z^{i}_{t})\!+\!\epsilon^2\mathfrak g( Z^{i}_{t})+g(Z^{i}_{t})u_t)\Delta t  + \epsilon g(Z^{i}_{t})  \Delta  W^i_t$
\ENDFOR
\STATE $k^\star(t,\cdot)\!=\! \argmin_{k\in \Psi} \frac{1}{N}\sum_{i=1}^N \big[\frac{1}{2}\|g( Z^{i}_{t})k(t,Z^i_{t}) \!+\! \mathfrak g( Z^i_{t})\|^2 \!+\!  \sum_{j,l}^n(G_{j,l}(Z^i_{t})\partial_{x_l} (g(Z^{i}_{t})k(t,Z^i_{t})\!+\!\mathfrak g( Z^i_{t}))_j)\big]$

 \STATE \textbf{Output:}  $\{k^\star(t,\cdot)\}_{t\in \{0,\Delta t,\ldots,T\} }$
\end{algorithmic}
\label{alg:improved}
\end{algorithm}

\section{Numerical Results}\label{sec:results}
\subsection{Numerical Algorithm}
\label{sec:Algorithm}
In this section, we introduce our proposed numerical algorithm which is based on the methodology described in Section~\ref{sec:methodology}. The algorithm starts with simulating $N$ random realizations $\{Z^i_t\}_{i=1}^N$ of the process \eqref{eq:Z-alg}, with the flexibility of considering an additional  deterministic control input $u_t$. The simulation is carried out using the Euler-Maruyama discretization method with the step-size $\Delta t$. In order to find the control law~\eqref{eq:fbcontrol}, we use the decomposition  of the score function \eqref{eq:score-decom} and modify the score function optimization problem \eqref{eq:score-opt} according to $\min_k\,J(gk+\mathfrak g)$. 
To solve this optimization problem, we parameterize $k$ with a neural network with a 3-block ResNet architecture where each block consists of 2 linear layers of width 32 and an exponential linear unit (ELU)-type activation function. We use ADAM optimizer to find the parameters of the neural network.
The batch is generated by uniformly sampling $K_1=\lfloor\frac{T}{10\Delta t}\rfloor + 1$ time instants $\{t_1,t_2,\ldots,t_{K_1}\}$  from $[0,T]$, and $K_2=32$ random samples of the $N$ trajectories $\{Z^i_{t_1},\ldots,Z^i_{t_{K_1}}\}_{i=1}^N$. The details of the algorithm appear in Algorithm \ref{alg:improved}.
The code for reproducing the results is available online~\footnote{\url{https://github.com/YuhangMeiUW/P2P}}.

The deterministic control input $u_t$ is designed   to approximately bring $Z_T$ to the vicinity of  $x_0$. For example, this control may be obtained by the application of trajectory optimization techniques to the deterministic version of the model~\eqref{eq:Z-alg}, when $\epsilon =0$.   In addition to decreasing the bias error, the control input $u_t$ serves as an ``importance sampling" mechanism that guides $Z_t$ to be sampled in areas of the domain where the control law is more relevant to the initial condition $x_0$, thus increasing sampling efficiency.

\subsection{Two-dimensional Brownian Bridge}\label{sec:bb-exp}
We consider a 2-dimensional system governed by the SDE
\begin{equation}\label{eq:2nd-bb}
    \ud X_t = U_t \ud t + \epsilon \ud W_t, ~X_0=x_0,
\end{equation}
and let $\epsilon=0.3$, $x_0 = (0,0)^\top$, $T=1$, and $x_f = (2,2)^\top$. This is a linear Gaussian model with $A=0$ and $B=I$. We employ the deterministic control $u_t=x_0-x_f$ which brings $Z_0=x_f$ to $Z_1=x_0$ in the deterministic setting. In this case, the resulting control law for $X_t$ takes the form 
\begin{align}
    U_t=  x_f-x_0 - \frac{\epsilon^2(X_t-(1-t)x_0-tx_f)}{\epsilon^2(1-t) + \sigma^2}.\label{eq:analy-sol-u}
\end{align}

We apply Algorithm \ref{alg:improved} with $u_t = x_0-x_f$, $N=1000$, $\sigma=0$, and $\Delta t = 0.004$. The resulting  trajectories $\{Z^i_t\}_{i=1}^N$ and $\{X^i_t\}_{i=1}^N$, along with the control inputs $\{U^i_t\}_{i=1}^N$,  are shown in Fig. \ref{fig:bb-X-compare}. 
The result demonstrates that the control law obtained from Algorithm \ref{alg:improved} successfully steers all trajectories $X^i_t$ from $x_0$ to desired target $x_f$.
\begin{figure*}[tb]
	\centering
    \begin{subfigure}{0.32\textwidth}\centering
        \includegraphics[width=\hsize,trim={0pt 0pt 12pt 41pt},clip]{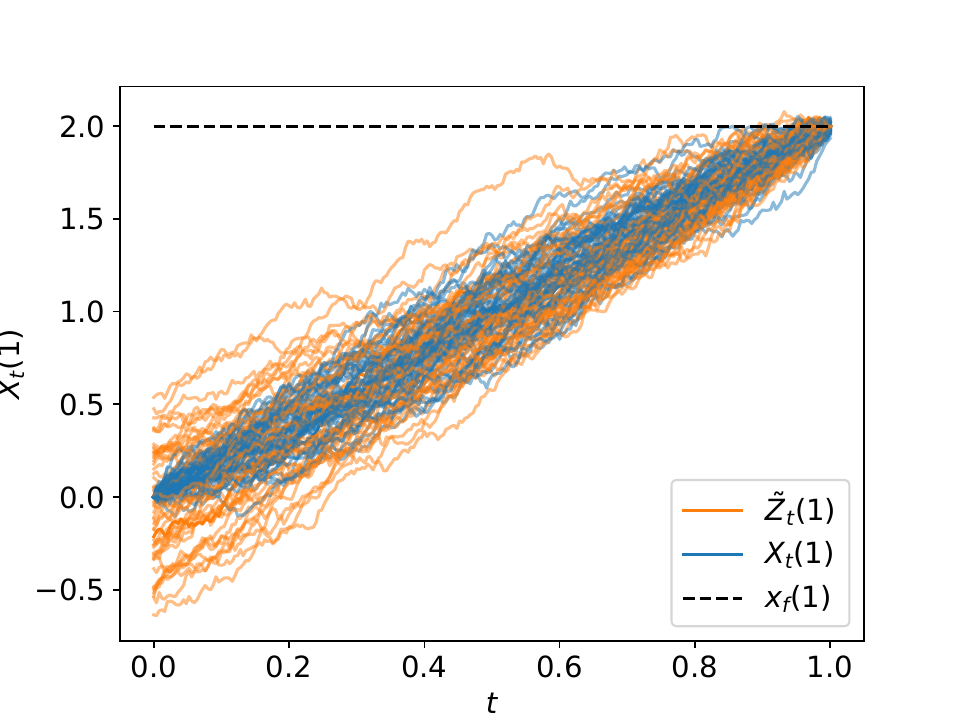}
        \caption{}
        \label{fig:xz1}
    \end{subfigure}
    \begin{subfigure}{0.32\textwidth}
        \includegraphics[width=\hsize,trim={0pt -5pt 12pt 41 pt},clip]{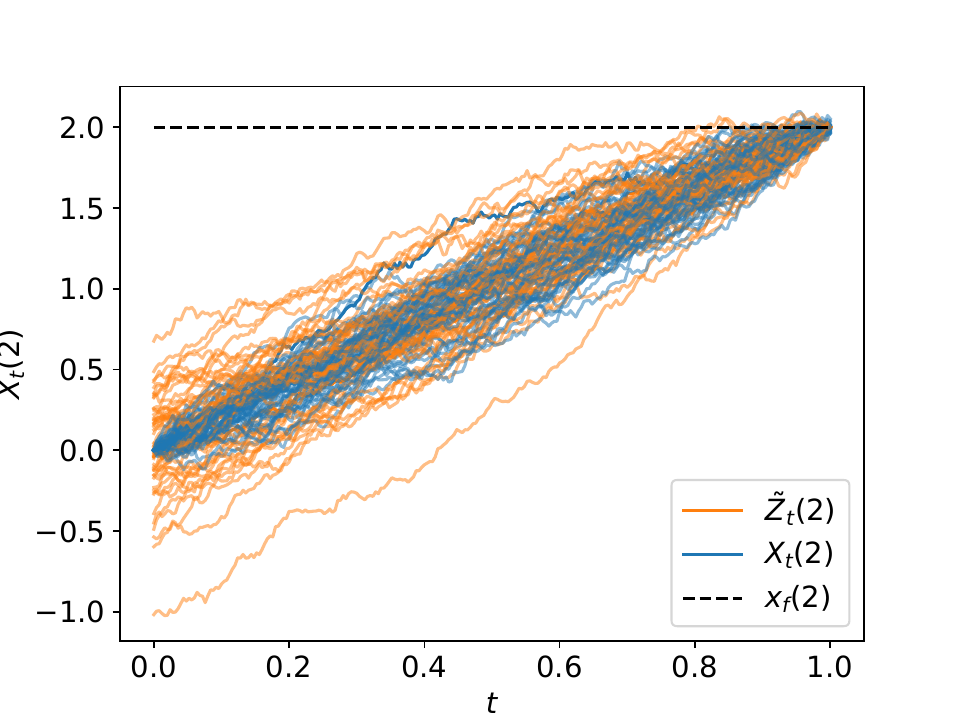}
        \caption{}
        \label{fig:xz2}
    \end{subfigure}
    \begin{subfigure}{0.32\textwidth}
        \includegraphics[width=\hsize,trim={0pt 0pt 12pt 41 pt},clip]{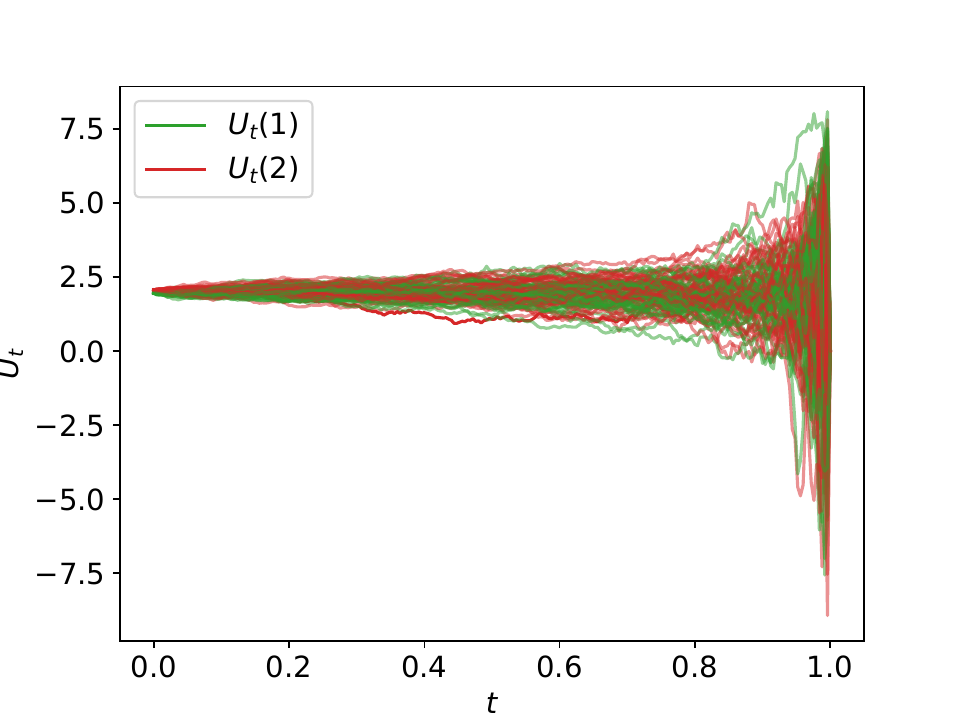}
        \caption{}
        \label{fig:u}
    \end{subfigure}
	\caption{Numerical result for the application Algorithm~\ref{alg:improved} to the two-dimensional Brownian bridge example of Section~\ref{sec:bb-exp}: (a) First component of $X_t$ and $\tilde Z_t$ (b) Second component of $X_t$ and $\tilde Z_t$ (c) Control input $U_t$.
    }
    \label{fig:bb-X-compare}
\end{figure*}

In order to quantify the performance of the control law, we introduce the following mean-squared-error (MSE) criteria 
\begin{equation}\label{eq:mse}
    MSE = \frac{1}{N}\sum_{i=1}^N \|X_T^i - x_f\|_2^2.
\end{equation}
We investigate the influence of the time step $\Delta t$ and standard deviation $\sigma$ on the MSE criteria. 
The result for varying time step-size is presented in Fig \ref{fig:mse_dt}, where we fix $\sigma=0$. We also show the MSE corresponding to implementing the exact form of the control law~\eqref{eq:analy-sol-u} and the open-loop control $U_t=x_f-x_0$ as baselines. It is observed that the algorithm performs almost as good as the exact solution, and the MSE decreases as $\Delta t \to 0$. The result for varying $\sigma$ is presented in Fig \ref{fig:mse_sigma} with fixed $\Delta t = 0.004$, where for comparison, the case without using the deterministic input is also included. It is observed that including the deterministic input significantly decreases the MSE when $\sigma>0$, justifying the remark~\ref{rem:improve-alg}. Moreover, $\sigma$ acts as a regularizer in the optimization procedure and decreases the difference between Algorithm 1 and the exact solution. Although increasing $\sigma$ increases MSE, it serves to avoid singularity of the control,  which is shown by computing  the averaged control energy 
\begin{equation}\label{eq:unorm}
    U_{norm} = \frac{1}{N}\sum_{i=1}^N\int_0^T \|U_t^i\|^2 \ud t
\end{equation}
as a function of $\sigma$, in Fig \ref{fig:unorm}. The MSE and  $U_{norm}$ are both averaged over $5$ independent experiments, where the shaded region represents the range from the minimum to the maximum across experiments.
\begin{figure*}[tb]
	\centering
    
    \begin{subfigure}{0.32\textwidth}\centering
        \includegraphics[width=\hsize,trim={0pt 0pt 12pt 41pt},clip]{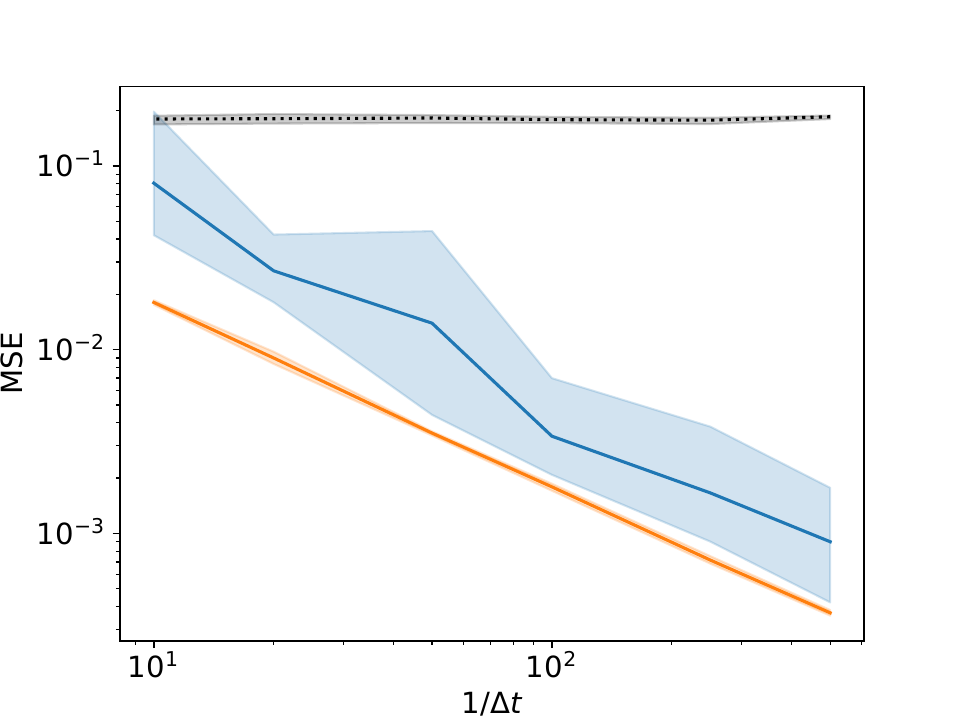}
        \caption{}
        \label{fig:mse_dt}
    \end{subfigure}
    \begin{subfigure}{0.32\textwidth}
        \includegraphics[width=\hsize,trim={0pt -5pt 12pt 41 pt},clip]{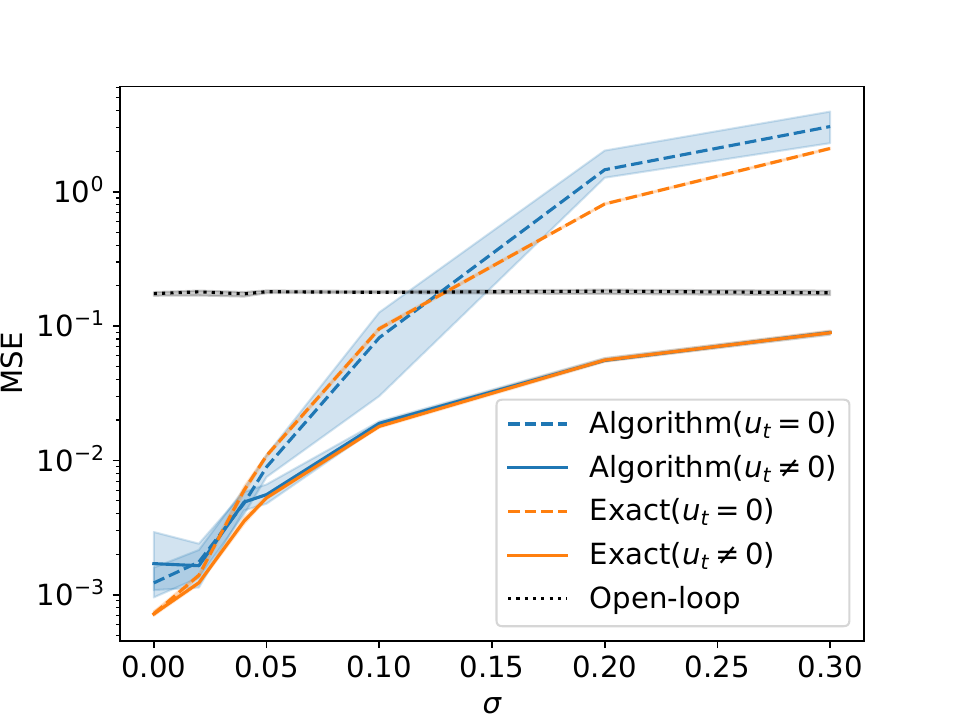}
        \caption{}
        \label{fig:mse_sigma}
    \end{subfigure}
    \begin{subfigure}{0.32\textwidth}
        \includegraphics[width=\hsize,trim={0pt 0pt 12pt 41 pt},clip]{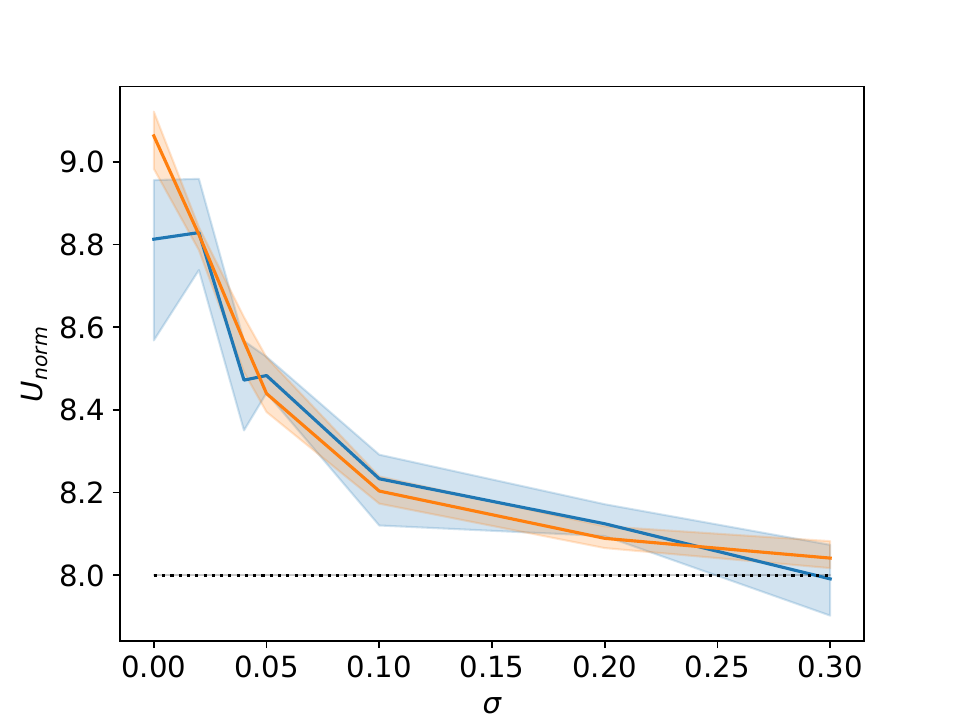}
        \caption{}
        \label{fig:unorm}
    \end{subfigure}
	\caption{Numerical error analysis for the application Algorithm~\ref{alg:improved} to the two-dimensional Brownian bridge example of Section~\ref{sec:bb-exp}:  (a) Influence of time step-size on MSE~\eqref{eq:mse}, with $\sigma=0$ (b) Influence of $\sigma$ on MSE, with $\Delta t = 0.004$ (c) Influence of $\sigma$ on $U_{norm}$~\eqref{eq:unorm}, with $\Delta t = 0.004$. The results compare (i) Algorithm \ref{alg:improved}; (ii) the exact solution~\eqref{eq:analy-sol-u}; and  implementing the open loop control $U_t=x_f-x_0$. Panel (b) also includes the case where the deterministic input $u_t=0$ in both exact solution and Algorithm~\ref{alg:improved}.}
\end{figure*}
\subsection{Inverted Pendulum}\label{sec:IP-exp}
We consider the  stochastic pendulum dynamics with 
\begin{align*}
    f(x) =  \begin{bmatrix}
        x(2) \\
        \sin(x(1)) - 0.01x(2)
    \end{bmatrix},\quad g(x) = \begin{bmatrix}
        0\\1
    \end{bmatrix},
\end{align*}
$\epsilon=0.3$, $x_0=[\pi,0]^\top$, $x_f = [0,0]^\top$, and $T=5$. The first component of the state is the angle where the angle equal to $0$ denotes the upward position of the pendulum and $\pi$ denote the downward. The goal is to bring the pendulum from the downward position to the upward position. We apply Algorithm~\ref{alg:improved} with $N=1000$, $\sigma=0$, $u_t=0$, and $\Delta t =0.004$. The resulting trajectories and control inputs are shown in Fig. \ref{fig:ip-traj}, where the successful steering of the pendulum to the upward position is demonstrated. 

\begin{figure*}[tb]
	\centering
    \begin{subfigure}{0.32\textwidth}\centering
        \includegraphics[width=\hsize,trim={0pt 0pt 12pt 41pt},clip]{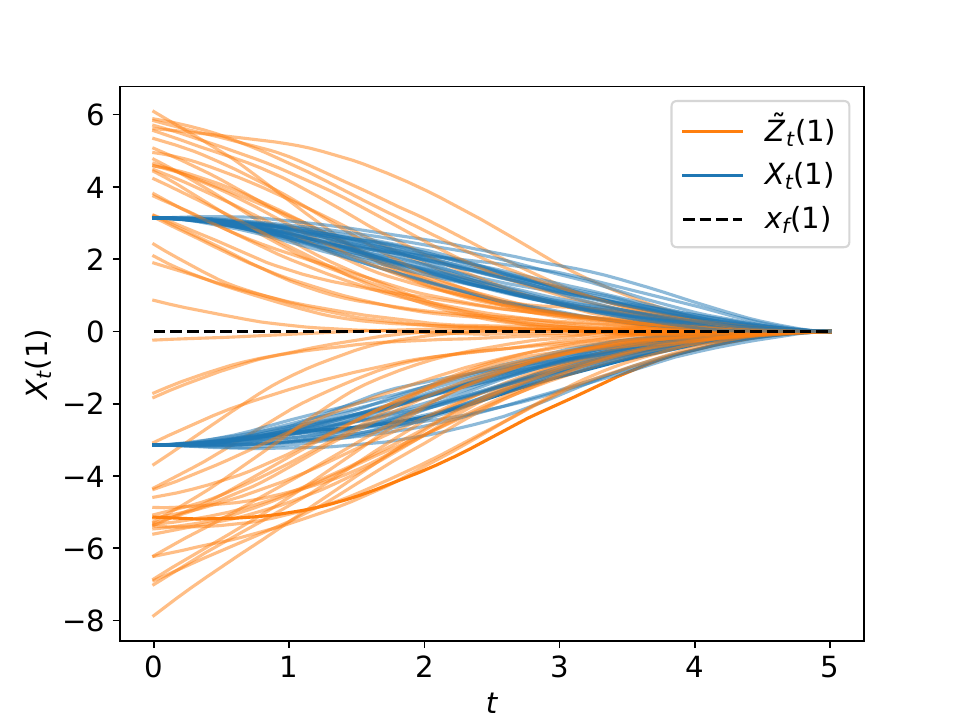}
        \caption{}
        \label{fig:ip-xz1}
    \end{subfigure}
    \begin{subfigure}{0.32\textwidth}
        \includegraphics[width=\hsize,trim={0pt -5pt 12pt 41 pt},clip]{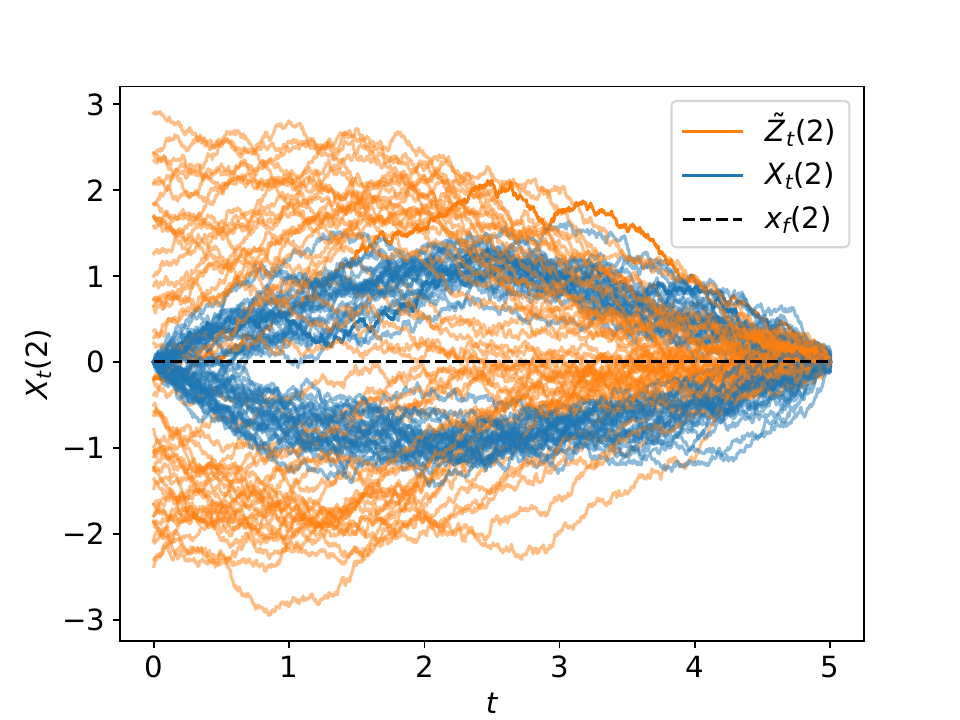}
        \caption{}
        \label{fig:ip-xz2}
    \end{subfigure}
    \begin{subfigure}{0.32\textwidth}
        \includegraphics[width=\hsize,trim={0pt 0pt 12pt 41 pt},clip]{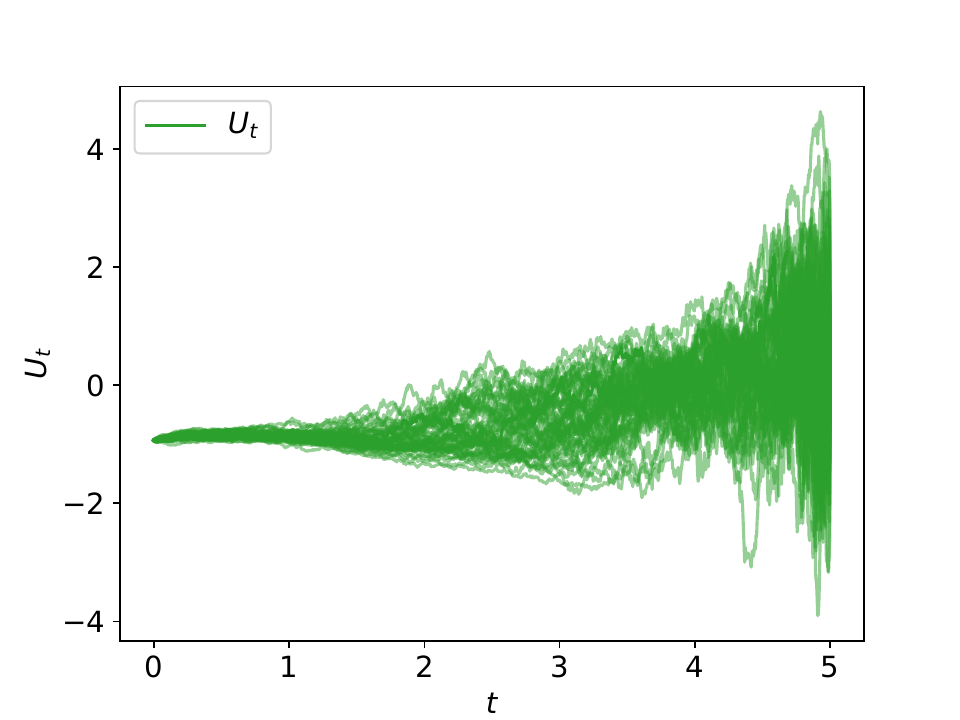}
        \caption{}
        \label{fig:ip-u}
    \end{subfigure}
	\caption{Numerical result for the application Algorithm~\ref{alg:improved} to the inverted pendulum example of Section~\ref{sec:IP-exp}: (a) First component of $X_t$ and $\tilde Z_t$ ; (b) Second component of $X_t$ and $\tilde Z_t$; (c) Control input $U_t$. The first component of the initial state  is equal to $\pm\pi$ and represents the downward position of the pendulum, while the first component of the terminal state is equal to $0$, representing the upward position. 
    }
    \label{fig:ip-traj}
\end{figure*}

{
\section{Conclusion}

This paper introduces a novel approach for steering nonlinear stochastic control-affine systems to a desired target state within a finite time horizon, leveraging time-reversal theory of diffusions. By constructing feedback control laws based on the score function associated with the reversed dynamics, the proposed method ensures finite-time convergence to the target state. Unlike traditional Schr\"{o}dinger bridge methods or stochastic optimal control formulations, our approach is computationally efficient and applicable to both linear and nonlinear stochastic systems without relying on optimality relative to the initial condition.

An extension of the theory to address practical challenges related to inevitable singularities in the control law near the terminal time is also presented through relaxation of the almost-sure constraint to a distribution constraint and explicit error bounds of this approach are provided for the linear Gaussian case. Numerical experiments demonstrate the effectiveness of the method across benchmark examples, including a Brownian bridge and inverted pendulum dynamics. 

Future research includes the extension of the theoretical results for assigning terminal distribution constraints in the general nonlinear system setting, the improving score function approximation techniques,  exploring the optimality properties of the proposed control law through its connection to Doob's $h$-transform, and numerical verification of our method on real-world control applications in Aerospace~\cite{exarchos2019optimal,lew2020chance}.

}


    \bibliographystyle{IEEEtran}
    \bibliography{references}

\end{document}